\newtheorem{thm}{Theorem}[section]
\newtheorem{lem}[thm]{Lemma}
\newtheorem{claim}[thm]{Claim}
\theoremstyle{definition}
\newtheorem{defin}[thm]{Definition}
\newtheorem{exa}[thm]{Example}
\newtheorem{obs}[thm]{Observation}
\newcommand{\diam}{\operatorname{diam}}
\newcommand{\Gr}{\operatorname{Gr}}
\numberwithin{equation}{section}
\begin{document}


\baselineskip=17pt



\title[On the first homology of Peano Continua]{On the first homology of Peano Continua}

\author[Gregory R.\ Conner]{Gregory R.\ Conner}
\address{Mathematics Department\\
        Brigham Young University\\
        Provo, UT. 84602\\
        USA}
\email{conner@math.byu.edu}

\author[Samuel Corson]{Samuel Corson}
\address{Mathematics Department\\
1326 Stevenson Center\\
Vanderbilt University\\
Nashville, TN 37240\\
USA}
\email{samuel.m.corson@vanderbilt.edu}

\date{}

\begin{abstract}
We show that the first homology group of a locally connected compact metric space is either uncountable or is finitely generated.  This is related to Shelah's well-known result \cite{S} which shows that the fundamental group of such a space satisfies a similar criterion. We give an example of such a space whose fundamental group is uncountable but whose first homology is trivial, showing that our result doesn't follow from Shelah's. We clarify a claim made by Pawlikowski \cite{P} and offer a proof of the clarification.
\end{abstract}

\subjclass[2010]{Primary 14F35; Secondary 03E15}

\keywords{fundamental group, homology, descriptive set theory}

\maketitle

\section{Introduction}
The classical Hahn-Mazurkiewicz Theorem states that a connected, locally connected compact metric space is the continuous image of an arc -- a \emph{Peano continuum}.  Shelah showed \cite{S} that if the first homotopy group, the \emph{fundamental group}, of a Peano continuum is countable then it is finitely generated.  This can be compared to our result:

\begin{thm}  The first homology group of a compact locally connected metric space is either uncountable or isomorphic to a direct sum of finitely many cyclic groups.
\end{thm}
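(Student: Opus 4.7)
The plan is to imitate the descriptive set-theoretic strategy behind Shelah's theorem but to work directly with $1$-cycles, since the example promised in the abstract shows that Shelah's statement for $\pi_1$ does not formally imply ours. First, I would equip the space $\Omega(X,x_0)$ of continuous based loops with the compact-open topology; since $X$ is a compact metric space, $\Omega(X,x_0)$ is Polish. The relation ``$\gamma\sim\gamma'$ iff $\gamma-\gamma'$ bounds a singular $2$-chain in $X$'' is an analytic equivalence relation on $\Omega(X,x_0)$, because the witnessing chains (finite $\mathbb{Z}$-linear combinations of continuous simplices $\Delta^2\to X$) range over a Polish parameter space, and the boundary condition is closed. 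The $\sim$-classes are in bijection with a subgroup of $H_1(X)$, and in fact generate it.

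Second, I would invoke the Pawlikowski-style dichotomy (in the clarified form proved later in the paper) for analytic equivalence relations of this kind: either the number of classes is at most countable, or there is a perfect set of pairwise $\sim$-inequivalent loops, in which case $H_1(X)$ has cardinality $2^{\aleph_0}$. The uncountable alternative already yields the theorem, so I may assume $H_1(X)$ is countable.

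Third, I would argue that a countable $H_1(X)$ must be finitely generated. Suppose for contradiction that countable $H_1(X)$ is not finitely generated; choose $\mathbb{Z}$-linearly independent classes $[\gamma_1],[\gamma_2],\dots$, with chosen loop representatives $\gamma_i$. Using local connectedness together with compactness of $X$, I would refine the $\gamma_i$ to a sequence of loops whose basepoints converge to a point $p\in X$, whose supports lie in a nested sequence of connected open neighborhoods of $p$ with diameters tending to $0$, and which remain $\mathbb{Z}$-independent in $H_1(X)$. Then for every $S\subseteq\mathbb{N}$ the concatenation $\gamma_S$ in suitable order is a well-defined continuous loop at $p$, and I would aim to show that $S\neq S'$ implies $[\gamma_S]\neq[\gamma_{S'}]$, exhibiting $2^{\aleph_0}$ distinct classes and contradicting countability of $H_1(X)$.

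The main obstacle lies in this last separation step. Making $\gamma_S$ an honest loop is relatively straightforward once the $\gamma_i$ are small enough, but distinguishing the classes $[\gamma_S]$ in $H_1(X)$ requires, in spirit, a Hawaiian-earring-style argument: one wants a homology-level projection that reads off the $i$th coordinate of $S$. In an arbitrary Peano continuum no such retraction need exist on the nose, so one must work with the independence hypothesis more carefully, likely via a Čech-type inverse limit or via nerves of finite open covers supplied by local connectedness, to extract enough ``coordinate functionals'' on $H_1(X)$. Bridging the abstract analytic dichotomy of step two to this concrete infinite-concatenation argument is, I expect, the technical heart of the proof.
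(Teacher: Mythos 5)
There is a genuine gap, and it sits precisely where you locate the ``technical heart.'' Your step two appeals to a dichotomy that does not exist at the level of generality you need: for \emph{analytic} equivalence relations on a Polish space there is no theorem saying ``either at most countably many classes or a perfect set of pairwise inequivalent points'' (Silver's dichotomy requires the relation to be coanalytic, and Burgess's theorem for analytic relations only gives $\leq\aleph_1$ classes versus a perfect set; the relation $E_{\omega_1}$ is the standard counterexample to the version you invoke). The ``clarified form proved later in the paper'' (Lemma \ref{Pawlikowski}) is not such a general dichotomy: it applies only to relations on the Cantor set $\{0,1\}^{\mathbb{N}}$ with the additional combinatorial property that changing a single coordinate always destroys equivalence, and that property is exactly what the small-loop concatenation construction is designed to produce. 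So your step two silently presupposes the construction you postpone to step three, and cannot be used to ``reduce to the countable case'' beforehand.

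Step three then aims at a statement that is both stronger than needed and, as you concede, out of reach: $\mathbb{Z}$-independence of the classes $[\gamma_i]$ does not imply that distinct subsets $S\neq S'$ give non-homologous infinite concatenations, and no retraction or ``coordinate functional'' is available in a general Peano continuum. The paper never proves such a separation. Its route is: if there is \emph{no} $\epsilon>0$ such that all loops of diameter $<\epsilon$ are nulhomologous fails --- equivalently, by Lemma \ref{loctriv} (which rests on the nerve/cover machinery of \cite{CC}, Lemma 7.6 and Theorem 7.3(2), together with compactness; your plan has no substitute for this reduction, since loops representing independent classes need not be small) --- then one extracts non-nulhomologous loops $f_n$ with $\diam(f_n)\leq 2^{-n}$ converging to a point, forms $f^{\alpha}$ for $\alpha\in\{0,1\}^{\mathbb{N}}$, and only needs the \emph{one-coordinate-flip} separation: if $\alpha,\beta$ differ at exactly one $n$, then the initial segments and tails of $f^{\alpha},f^{\beta}$ literally coincide, so cancellation in the abelian group $H_1(X)$ would force $f_n$ to be nulhomologous, whence $\alpha\nsim\beta$. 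Analyticity of $\sim$ (proved in the paper via Hurewicz and commutator-assembly maps rather than singular $2$-chains) plus this flip property makes $\sim$ meager by a Kuratowski--Ulam argument, and Mycielski's theorem then yields a perfect set of pairwise inequivalent parameters, i.e.\ continuum many homology classes. In short: Baire category does the separating work you were hoping to do by hand, and the finite-generation alternative comes from the cover/nerve lemma, not from an independence argument; as written, your outline has no complete proof of either half.
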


If $X$ is a locally connected compact metric space then $X$ has only finitely many connected components, each of which is a Peano continuum and therefore path connected.  Then $H_1(X)$ is the direct sum of the first homology of each of the (path) components of $X$.  Thus it suffices to prove Theorem 1.1 for a Peano continuum.  For the sake of generality, we note that \emph{is countable} may be replaced by \emph{has cardinality less than the continuum} in both the result of Shelah and our result.


\begin{exa} We construct a space whose existence testifies that our result cannot follow immediately from Shelah's.  Let $X$ be a simplicial complex whose fundamental group is the alternating group $A_5$.  Recall that each element of $A_5$ is a commutator and thus $X^\mathbb{N}$, endowed with the product topology, is a Peano continuum whose fundamental group, ${A_5}^\mathbb{N}$, has the property that each of its elements is a commutator.  Whence $H_1(X^\mathbb{N})$, which is the abelianization of  ${A_5}^\mathbb{N}$  by the classical Hurewicz theorem, is trivial.
\end{exa}

\section{A construction}
We begin with the following lemma.
\begin{lem}\label{loctriv}  Let $X$ and $Y$ be Peano continua and $f: X \rightarrow Y$ be a mapping.  If there exists $\epsilon>0$ such that each loop of diameter less than $\epsilon$ is mapped under $f$ to a nulhomogolous loop, then $f_*(H_1(X))$ is a finite sum of cyclic groups.
\end{lem}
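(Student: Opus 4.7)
The plan is to show that $f_{*}(H_{1}(X))$ is contained in $f_{*}(H_{1}(\Gamma))$ for a suitable finite graph $\Gamma \subseteq X$. Since the homology of a finite graph is finitely generated, and every subgroup of a finitely generated abelian group is a finite direct sum of cyclic groups by the structure theorem, this will finish the proof.

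I build $\Gamma$ using the Peano continuum structure of $X$. By compactness and local path connectedness, cover $X$ by finitely many path connected open sets $U_{1},\dots,U_{n}$ of diameter less than $\epsilon/2$. Pick a vertex $x_{i}\in U_{i}$ and a base point $x_{0}=x_{1}$. For each $i$, choose a path $\mu_{i}$ from $x_{0}$ to $x_{i}$ in $X$; for each pair with $U_{i}\cap U_{j}\neq\emptyset$, choose an edge $\alpha_{ij}$ from $x_{i}$ to $x_{j}$ lying in $U_{i}\cup U_{j}$, a set of diameter less than $\epsilon$. Let $\Gamma \subseteq X$ be the union of these vertices, edges, and connecting paths; then $H_{1}(\Gamma)$ is a finitely generated free abelian group.

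The heart of the argument is to show that every loop $\gamma$ based at $x_{0}$ has homology class $[\gamma]= c+t \in H_{1}(X)$, with $c$ in the image of $H_{1}(\Gamma)\to H_{1}(X)$ and $t$ a finite sum of classes of loops of diameter less than $\epsilon$. By the Lebesgue number lemma I subdivide $\gamma = \gamma_{1}\cdots\gamma_{m}$ so that each $\gamma_{k}$ lies in some $U_{i_{k}}$, with subdivision points $p_{k}\in U_{i_{k}}\cap U_{i_{k+1}}$. Inside $U_{i_{k}}$ I connect $p_{k-1}$ and $p_{k}$ to $x_{i_{k}}$ by auxiliary paths, turning $\gamma_{k}$ into a path $\tilde\gamma_{k}$ from $x_{i_{k}}$ to $x_{i_{k+1}}$ lying in $U_{i_{k}}\cup U_{i_{k+1}}$. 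Comparing $\tilde\gamma_{k}$ with the edge $\alpha_{i_{k}i_{k+1}}$ produces a correction loop $\tau_{k}$ in $U_{i_{k}}\cup U_{i_{k+1}}$, of diameter less than $\epsilon$. Rebasing at $x_{0}$ via $\mu_{i_{k}}$ and $\mu_{i_{k+1}}$ then expresses $[\gamma]$ as a sum of the combinatorial classes $[\mu_{i_{k}}\alpha_{i_{k}i_{k+1}}\mu_{i_{k+1}}^{-1}]$, all lying in the image of $H_{1}(\Gamma)$, plus classes of the small loops $\tau_{k}$.

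Applying $f_{*}$ then kills the correction terms by hypothesis, leaving $f_{*}[\gamma]\in f_{*}(H_{1}(\Gamma))$. Since $H_{1}(X)$ is generated by classes of loops at $x_{0}$, this gives $f_{*}(H_{1}(X))\subseteq f_{*}(H_{1}(\Gamma))$, a finitely generated subgroup of $H_{1}(Y)$. The main obstacle I anticipate is the bookkeeping in the subdivision step: verifying that the correction loops $\tau_{k}$ really lie in a union of two adjacent cover elements (so that the diameter bound is available) and that the rebasing genuinely returns an element of $H_{1}(\Gamma)$ rather than something living only in $X$.
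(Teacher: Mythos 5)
Your argument is essentially a self-contained re-proof of the special case of the Cannon--Conner machinery that the paper simply quotes: the paper composes the Hurewicz epimorphism $\pi_1(X)\to H_1(X)$ with $f_*$ and invokes Lemma 7.6 (via Theorem 7.3(2)) of \cite{CC}, whose proof is exactly the nerve-of-a-finite-cover argument you carry out by hand. Your route buys independence from \cite{CC} at the cost of the subdivision bookkeeping; the paper's route is two lines but leans on an external lemma (which the authors even have to patch in a footnote). The core of your argument is sound: the cover by finitely many path-connected open sets of diameter $<\epsilon/2$ exists because a Peano continuum is locally path connected and compact; adjacent cover elements have union of diameter $<\epsilon$, so the correction loops $\tau_k$ do fall under the hypothesis and $f_*[\tau_k]=0$; conjugation and choice of basepoint-connecting paths are invisible in $H_1$, so the rebasing is harmless; and the endpoint segments cause no trouble since $x_0=x_1$ lies in $U_1\cap U_{i_1}$ and $U_1\cap U_{i_m}$, so the needed edges exist. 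Since every class in $H_1(X)$ is the class of a loop at $x_0$ (Hurewicz), $f_*(H_1(X))$ is generated by the finitely many elements $f_*[\mu_i\ast\alpha_{ij}\ast\mu_j^{-1}]$, hence is a finite direct sum of cyclic groups.

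One claim in your write-up is false as stated and should be excised: the subspace $\Gamma\subseteq X$ obtained as the union of the chosen paths need not be a finite graph, and $H_1(\Gamma)$ need not be finitely generated --- two arcs in the plane can intersect in a Cantor set, and their union then carries infinitely many independent $1$-cycles. Fortunately you never need $H_1(\Gamma)$ itself: your decomposition shows that $f_*[\gamma]$ is a sum of the finitely many ``combinatorial'' classes $f_*[\mu_{i}\ast\alpha_{ij}\ast\mu_{j}^{-1}]$ together with images of small loops, so you should replace ``$f_*(H_1(\Gamma))$'' throughout by the subgroup of $H_1(Y)$ generated by these finitely many elements. With that substitution (and the structure theorem for finitely generated abelian groups, as you say), the proof is complete.
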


\begin{proof} Consider the composition of the Hurewicz map (which is surjective by the Hurewicz theorem) and the map $f_*$ and apply Lemma 7.6 in \cite{CC} which states that:

If $X$ is a connected, locally path connected separable metric space which is locally trivial with respect to $g: \pi_1(X) \rightarrow K$ then $g(\pi_1(X))$ is countable, and furthermore is finitely generated if $X$ is compact.\footnote{The referee has pointed out that there was a minor flaw in the proof of Lemma 7.6, which we correct here.  Select open covers $\{U_x\}$, $C_1$ and $C_2$ as in the original proof.  Choose $C$ to be an open refinement of $C_2$ which consists of open path connected sets.  If $X$ is compact we may take $C$ to be finite, and if $X$ is merely separable we may take $C$ to be countable (since separable metric spaces are Lindel{\"o}f).  Now apply Theorem 7.3 (2) of \cite{CC} to conclude that the image of $g$ is finitely generated (resp. countable) in case $X$ is compact (resp. separable).}

To finish, we need only remark that a finitely generated Abelian group is a finite sum of cyclic groups.
\end{proof}

Consider a Peano continuum $X$ with metric $d$ such that $H_1(X)$ is not finitely generated.  By Lemma \ref{loctriv}, letting $f:X \rightarrow X$ be identity, there exists a sequence $(f_n)_{n \in \mathbb{N}}$ of continuous mappings of $S^1$ to $X$, with $\diam(f_n) \leq \frac{1}{2^n}$, each of which is not nulhomologous.  By compactness of $X$, there exists a point $x \in X$ and a subsequence (without loss of generality, let the subsequence be the original sequence) such that the sequence of images $f_n(S^1)$ converges to $x$ in the Hausdorff metric.  By local path connectedness, we may assume that $x \in f_n(S^1)$ for all $n \in \mathbb{N}$, and by passing to a subsequence (without loss of generality, let the subsequence be the original sequence), we may assume again that $\diam(f_n) \leq \frac{1}{2^n}$ for all $n \in \mathbb{N}$.  Thus, each $f_k$ can be thought of as a loop $f_k: [0,1] \rightarrow X$ such that $f(0) = f(1) = x$ (this allows us to concatenate such mappings together).

Recall that the product space $\{0,1\}^{\mathbb{N}}$ is the Cantor set, where each factor $\{0,1\}$ is given the discrete topology.

Let $f_k^{1}$ be $f_k$ and $f_k^0$ be the constant loop at $x$. For $\alpha\in \{0,1\}^{\mathbb{N}}$, let $f^{\alpha} = f_0^{\alpha(0)} \ast f_1^{\alpha(1)} \ast f_2^{\alpha(2)} \ast \cdots$.  This can be thought of as the (pointwise) limit of a Cauchy sequence in the complete metric space $\mathcal{C}([0,1], X)$, the metric being the $\sup$ metric. Thus, each concatenation gives a continuous function from $S^1$ to $X$.  We define an equivalence relation on the Cantor set $\{0,1\}^{\mathbb{N}}$ as follows:  $\alpha \sim \beta$ iff $f^{\alpha}$ is homologous to $f^{\beta}$. 

\begin{defin} We say a space $Y$ is \emph{Polish} if it is completely metrizable
and separable (for example, the Cantor set $\{0,1\}^{\mathbb{N}}$.)  
If $Y$ is a Polish
space, we say that $A \subseteq Y$ is \emph{analytic} if there exists a
Polish space $Z$ and a closed set $D \subseteq Y \times Z$ such that
$A$ is the projection of $D$ in $Y$.  Analytic spaces are preserved
under continuous images and preimages, products, and under countable unions and intersections.   Closed sets are analytic.  
If $\sim$ is an analytic subset of $X \times X$ we say that \emph{$\sim$ is an analytic relation on $X$}.   A subspace of a topological space is \emph{perfect} if it is closed, nonempty and
has no isolated points.  A perfect subspace of a Polish space has the cardinality of
the continuum.
\end{defin}

\begin{lem}\label{littlelemma}  The space $\sim$ is an analytic equivalence relation on the Cantor set which has the property that if $\alpha$ and $\beta$ differ at exactly one point then $\alpha \nsim \beta$.
\end{lem}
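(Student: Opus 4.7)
The plan is to verify three separate claims: $\sim$ is an equivalence relation; $\sim$ is an analytic subset of $\{0,1\}^{\mathbb{N}} \times \{0,1\}^{\mathbb{N}}$; and $\alpha \nsim \beta$ whenever $\alpha$ and $\beta$ differ at a single coordinate. The first is immediate from the fact that being homologous is an equivalence relation on singular $1$-cycles.

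For analyticity I plan to factor through the loop space. First I would observe that $\phi\colon \{0,1\}^{\mathbb{N}} \to \mathcal{C}([0,1], X)$, $\alpha \mapsto f^{\alpha}$, is continuous: if $\alpha$ and $\beta$ agree on the first $N$ coordinates then $f^\alpha$ and $f^\beta$ coincide on an initial portion of $[0,1]$, while on the remaining tail both take values within $2^{-N}$ of $x$, so their sup-distance is at most $2^{-N+1}$. Consequently $\sim = (\phi\times\phi)^{-1}(R)$, where $R$ denotes the ``homologous'' relation on $\mathcal{C}([0,1], X)$, and it suffices to show $R$ is analytic.

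For that step I would invoke the Hurewicz theorem: two based loops $g, h$ at $x$ are homologous iff $g\ast h^{-1}$ is path-homotopic rel basepoint to a product of commutators $[a_1, b_1] \ast \cdots \ast [a_n, b_n]$ of based loops at $x$. For each fixed $n$, the set of tuples $(g, h, a_1, b_1, \ldots, a_n, b_n, H) \in \mathcal{C}([0,1], X)^{2+2n} \times \mathcal{C}([0,1]^2, X)$ for which $H$ witnesses the required homotopy is closed, being defined by pointwise equalities on compact sets. Its projection onto the first two coordinates is analytic, and the countable union over $n$ is precisely $R$. Packaging the unbounded existential over ``some $n$ and some bounding data'' as an analytic set is the key technical step; one could alternatively work directly with singular $2$-chains, but tracking cancellation among formal sums of edges is messier than appealing to Hurewicz.

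For the final assertion, suppose $\alpha, \beta$ differ only at coordinate $k$, with $\alpha(k)=1$ and $\beta(k)=0$. Write $f^{\alpha} = A \ast f_k \ast B$ and $f^{\beta} = A \ast c_x \ast B$, where $A = f_0^{\alpha(0)} \ast \cdots \ast f_{k-1}^{\alpha(k-1)}$ and $B = f_{k+1}^{\alpha(k+1)} \ast f_{k+2}^{\alpha(k+2)} \ast \cdots$ are loops at $x$ and $c_x$ is the constant loop. Since concatenation of based loops induces addition in $H_1(X)$, we obtain $[f^{\alpha}] - [f^{\beta}] = [f_k] - [c_x] = [f_k] \neq 0$ by the choice of the sequence $(f_n)$, so $\alpha \nsim \beta$.
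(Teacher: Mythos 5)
Your proposal is correct and follows essentially the same route as the paper: both prove analyticity by pulling back, along the continuous map $\alpha \mapsto f^{\alpha}$, the homologous relation on the loop space, which is exhibited via the Hurewicz characterization (homologous $=$ homotopic up to a product of commutators) as a countable union over $n$ of projections of closed witness sets in function spaces with the sup metric; keeping the commutator factors $a_i, b_i$ as explicit coordinates rather than first forming the image $C_n(H^{2n})$ is only a cosmetic difference. Your final cancellation argument in $H_1(X)$ for sequences differing at one coordinate is likewise the paper's argument, phrased additively.
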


\begin{proof}

Let $H$ be the space of all countinuous maps of $[0,1]$ to $X$ such that $\{0, 1\} \mapsto x$, with the metric on $H$ being the $\sup$ metric.  For each $n\in \mathbb{N}$ let $C_n: H^{2n} \rightarrow H$ be the map defined by mapping $(l_1, l_2, \ldots, l_{2n})$ to $$l_1\ast l_2\ast (l_1)^{-1}\ast (l_2)^{-1}\ast l_3\ast l_4\ast (l_3)^{-1}\ast (l_4)^{-1}\ast\cdots \ast l_{2n-1}\ast l_{2n}\ast (l_{2n-1})^{-1}\ast (l_{2n})^{-1}$$  Each such map $C_n$ is clearly continuous, and so the image $C_n(H^{2n})$ is an analytic subset of $H$.  Let $\mathcal{H}$ be the space of homotopies between loops, also under the $\sup$ metric.  Let $D \subseteq H^3 \times \mathcal{H}$ be defined by $D = \{(l_1, l_2, l_3, h): h$ homotopes $l_1$ to $l_2\ast l_3\}$.  The set $D$ is obviously closed in $H^3 \times \mathcal{H}$.  Then the set $D_n = (H\times H\times C_n(H^{2n}) \times \mathcal{H}) \cap D$ is analytic as an intersection of two analytic sets.  Letting $\sim_n$ be the projection of $D_n$ to $H \times H$, we see that $\sim_n$ is analytic.  Then $\bigcup_{n=0}^{\infty} \sim_n$ is also analytic.  Letting $F: \{0,1\}^{\mathbb{N}} \rightarrow H$ be given by $F(\alpha) = f^{\alpha}$, it is clear that $F$ is continuous.  Finally, $\sim = F^{-1}(\bigcup_{n=0}^{\infty} \sim_n)$, and so $\sim$ is analytic.

If $\alpha$ and $\beta$ differ exactly at $n \in \mathbb{N}$, then we know that $f_{0}^{\alpha(0)} \ast \cdots f_{n-1}^{\alpha(n-1)}$ is homotopic to $f_{0}^{\beta(0)} \ast \cdots f_{n-1}^{\beta(n-1)}$ and $f_{n+1}^{\alpha(n+1)} \ast f_{n+2}^{\alpha(n+2)} \cdots$ is homotopic to $f_{n+1}^{\beta(n+1)} \ast f_{n+2}^{\beta(n+2)} \cdots$ (the loops are in fact the same). Supposing that $\alpha \sim \beta$, we apply cancellations on the right and left so that $f_{n}^{\alpha(n)}$ is homologous to $f_{n}^{\beta(n)}$, so that $f_n$ is nulhomologous, a contradiction.
\end{proof}

\section{Proving Theorem 1.1}

Following the literature, there are two possible paths on which we might proceed to
finish the proof of our result.  Shelah \cite{S} and later Pawlikowski \cite{P} both argue that any equivalance relation on the Cantor set satisfying the conclusion of the previous lemma must contain a perfect set, which necessarily has the cardinality of the continuum.  However, this final portion of Shelah's seminal article is extremely terse and apparently uses a sophisticated technique related to forcing not generally available to the naive topologist.  For the sake of clarity and completeness we conclude by offering a complete and simplified discussion of Pawlikowski's method, which has the advantage of using terminology covered in a basic topology course. 

\begin{defin}
Recall that a set $A$ in a topological space $Y$ is \emph{nowhere dense} if its
closure has empty interior, is \emph{meager} if it can be written as a countable union of nowhere dense subsets of $Y$, and \emph{comeager} if $Y-A$ is meager. The Baire Category Theorem states that a complete metric space is not meager as a subset of itself.  A set $A \subseteq Y$ has \emph{the property of Baire} if there exists an open
set $O \subseteq Y$ such that $A \Delta O = (A \cup O) - (A\cap O)$
is meager (and say that $A$ is comeager in $O$.) Analytic spaces have the property of Baire. 
\end{defin}

To finish, we shall use the following consequence of the Kuratowski-Ulam Theorem:

\begin{lem}\label{KU}  If $Y$ is a Polish space and $A \subseteq Y \times Y$
is comeager in $U \times V$, with $U$ and $V$ open sets in $Y$, then
$\{y \in U: \{z \in V:(y,z) \in A\}$ is comeager in $V\}$ is
comeager in $U$.  Also, if $A \subseteq Y \times Y$ is meager, then
$\{y \in Y: \{z \in Y:(y,z) \in A\}$ is meager in $Y\}$ is comeager
in $Y$.

\end{lem}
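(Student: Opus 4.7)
The plan is to reduce both statements to the classical Kuratowski--Ulam theorem: if $B \subseteq Y \times Y$ is a meager subset of a Polish space $Y \times Y$, then $\{y \in Y : B_y \text{ is meager in } Y\}$ is comeager in $Y$, where $B_y := \{z : (y,z) \in B\}$.  The second assertion of the lemma is precisely this classical statement, so once it is in hand nothing further is required for it.

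To supply the classical theorem I would argue as follows.  By countable subadditivity of meager sets it suffices to treat a single closed nowhere dense $B \subseteq Y \times Y$, since intersecting countably many comeager sets again yields a comeager set.  Using separability of $Y$, fix a countable basis $\{V_n\}_{n \in \mathbb{N}}$ of $Y$ and define $E_n := \{y \in Y : V_n \subseteq B_y\}$.  Closedness of $B$ forces each $E_n$ to be closed, and the hypothesis that $B$ is nowhere dense forces each $E_n$ to have empty interior, since an interior point of $E_n$ would yield a nonempty open rectangle $W \times V_n \subseteq B$.  Hence $\bigcup_n E_n$ is meager, and outside this union $B_y$ is a closed set with empty interior, hence nowhere dense and in particular meager.

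For the first assertion I would pass to the Polish space $U \times V$: any open subspace of a Polish space is Polish in a compatible complete metric, and nowhere-denseness of a subset of the open set $U \times V$ agrees whether computed in $U \times V$ or in $Y \times Y$, so ``meager in $U \times V$'' is unambiguous.  The hypothesis that $A$ is comeager in $U \times V$ says exactly that $(U \times V) \setminus A$ is meager in $U \times V$.  Applying the classical theorem inside $U \times V$ shows that the set of $y \in U$ for which $((U \times V) \setminus A)_y = V \setminus A_y$ is meager in $V$ is comeager in $U$; but ``$V \setminus A_y$ meager in $V$'' is by definition ``$A_y$ comeager in $V$'', which is the desired conclusion.

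The only genuine points that demand care are cosmetic ones about passing between ``meager in $Y$'' and ``meager in $U \times V$'' and confirming that open subspaces of Polish spaces are themselves Polish; neither is a real obstacle, and once they are dispatched the first assertion falls out of the second by complementation.
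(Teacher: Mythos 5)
Your proof is correct. The paper states this lemma without proof, simply as a consequence of the classical Kuratowski--Ulam theorem, and your argument is exactly the intended route: the second assertion is the classical theorem itself (which you prove correctly by reducing to a closed nowhere dense set and showing each $E_n=\{y: V_n\subseteq B_y\}$ is closed with empty interior, where the $V_n$ should be taken to be the \emph{nonempty} basic open sets), and the first assertion follows by applying it inside the open Polish subspace $U\times V$, using that meagerness relative to an open subspace agrees with meagerness in the ambient space.
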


For the uncountability of the number of classes of $\sim$ we offer a lemma similar to one in \cite{P} but whose proof is elementary.

\begin{lem}\label{Pawlikowski}  If $\sim$ is an equivalence relation on the Cantor set satisfying the conclusion of Lemma \ref{littlelemma} then $\sim$ is meager and has uncountably many classes.
\end{lem}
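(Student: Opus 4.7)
The plan is to combine a ``flip'' argument with the Baire property of analytic sets. For each $n \in \mathbb{N}$, let $\sigma_n : \{0,1\}^{\mathbb{N}} \to \{0,1\}^{\mathbb{N}}$ be the involution that flips the $n$-th coordinate; the hypothesis inherited from Lemma \ref{littlelemma} then says $\alpha \not\sim \sigma_n(\alpha)$ for every $\alpha$ and every $n$. The basic open cylinders in the Cantor set are determined by finitely many coordinates, so for any such cylinder $U$ and all sufficiently large $n$, the map $\sigma_n$ restricts to a self-homeomorphism of $U$. This will be used repeatedly.

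For the meagerness of $\sim$ I would argue by contradiction. If $\sim$ were nonmeager then, being analytic, it would have the property of Baire and so be comeager in some basic open product $U \times V \subseteq \{0,1\}^{\mathbb{N}} \times \{0,1\}^{\mathbb{N}}$. Lemma \ref{KU} would then produce some $\alpha \in U$ whose vertical slice $S = \{\beta \in V : \alpha \sim \beta\}$ is comeager in $V$. Choosing $n$ large enough that $\sigma_n$ is a self-homeomorphism of $V$, the image $\sigma_n(S)$ is also comeager in $V$, so the Baire category theorem forces $S \cap \sigma_n(S) \neq \emptyset$. Any $\beta$ in this intersection satisfies both $\alpha \sim \beta$ and $\alpha \sim \sigma_n(\beta)$, and transitivity yields $\beta \sim \sigma_n(\beta)$, contradicting the hypothesis.

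For the uncountability of the set of equivalence classes I would again argue by contradiction. If $\sim$ had only countably many classes, then $\{0,1\}^{\mathbb{N}}$ would be a countable union of them, and the Baire category theorem would force at least one class $E$ to be nonmeager. Since $E$ is a vertical section of the analytic set $\sim$ (i.e.\ the continuous preimage of $\sim$ under $\beta \mapsto (\alpha_0,\beta)$ for any fixed representative $\alpha_0 \in E$), it is itself analytic and so has the property of Baire; being nonmeager, $E$ must be comeager in some basic open cylinder $U$. Picking $n$ large enough that $\sigma_n(U) = U$, both $E \cap U$ and $\sigma_n(E \cap U)$ are comeager in $U$ and must therefore meet; any $\beta$ in the intersection has $\beta, \sigma_n(\beta) \in E$, which gives $\beta \sim \sigma_n(\beta)$, the same contradiction.

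The main subtlety is the interplay between the cylinder basis and the flips $\sigma_n$: one needs to restrict attention to basic open sets so that $\sigma_n$ eventually preserves them setwise, and to translate comeagerness of $\sim$ in a product $U \times V$ into comeagerness of a single vertical slice via Lemma \ref{KU}. Once these ingredients are in place, both halves of the lemma reduce to essentially the same flip-and-Baire pigeonhole.
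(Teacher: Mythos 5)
Your proof is correct. The meagerness half is essentially the paper's argument: comeagerness of $\sim$ in a basic box $U\times V$, Lemma \ref{KU}, and a coordinate flip; the only (harmless) variation is that you fix a single $\alpha\in U$ with comeager slice and flip inside $V$, whereas the paper flips on the $U$-side, taking $\alpha\in A\cap\Phi(A)$ for the comeager set $A$ of points with comeager slices. The uncountability half, however, is genuinely different. The paper uses the second statement of Lemma \ref{KU} to get that comeagerly many points have meager $\sim$-class, and then runs a transfinite induction of length $\omega_1$ to build an explicit family $\{\alpha_i\}_{i<\omega_1}$ of pairwise inequivalent points; this uses only the already-established meagerness of $\sim$ (plus the Baire Category Theorem), so it transfers verbatim to relations that merely have the property of Baire, which matters for the later $\kappa$-Suslin applications. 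You instead argue by contradiction: countably many classes would force some class $E$ to be nonmeager; $E$ is analytic (a continuous preimage of $\sim$), hence has the property of Baire, is comeager in a cylinder $U$, and the flip $\sigma_n$ with $\sigma_n(U)=U$ gives $\beta\sim\sigma_n(\beta)$, a contradiction. This is shorter, avoids transfinite recursion and the attendant use of choice, and reuses the same flip-and-Baire pigeonhole as the first half, at the cost of leaning on analyticity of the individual classes rather than just meagerness of $\sim$, and of producing only the cardinality conclusion rather than an explicit $\aleph_1$-sized set of pairwise inequivalent sequences. Both arguments establish the lemma as stated.
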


\begin{proof}  We show first that $\sim$ is meager.  Since $\sim$ is analytic it satisfies the property of Baire.
If $\sim$ were non-meager, then it would be comeager in a
neighborhood in $\{0,1\}^{\mathbb{N}} \times \{0,1\}^{\mathbb{N}}$
of the form $U \times V$, with $U$ and $V$ basic open sets in
$\{0,1\}^{\mathbb{N}}$.  By Lemma \ref{KU},

\begin{center}$A = \{\alpha \in U: \{\beta \in V: \alpha \sim \beta\}$ is
comeager in $V\}$

\end{center}

is comeager in $U$.  Let $n \in \mathbb{N}$ be greater than the
length of a generator of $U$.  Define $\Phi: U \rightarrow U$ by $\Phi(\alpha)(n) = 1-\alpha(n)$ and $\Phi(\alpha)(i) =
\alpha(i)$ for $i \neq n$.  Notice that $\Phi$ is a homeomorphism of
$U$ to itself.  Thus we can choose $\alpha \in A \cap \Phi(A)$, as $A \cap \Phi(A)$ is comeager.
Letting $\gamma  = \Phi(\alpha)$, we see that $\gamma$ and $\alpha$
differ only at $n$, so that $\alpha \nsim \gamma$.  From the
definition of $A$, we have comeagerly many $\beta \in V$ such that
$\alpha \sim \beta$.  Since $\gamma \in A$, the same can be said of
$\gamma$. Then for some $\beta$, we have $\beta \sim \alpha$ and $\beta
\sim \gamma$, implying $\gamma \sim \alpha$, a contradiction.

To see that $H_1(X)$ is uncountable we construct a set $Y \subseteq
\{0,1\}^{\mathbb{N}}$ of cardinality $\aleph_1$ such that for distinct
$\alpha$ and $\beta$ in $Y$ we have $\alpha \nsim \beta$.  For $\alpha \in \{0,1\}^{\mathbb{N}}$ we write
$\sim^{\alpha}$ as the equivalence class of $\alpha$.  As $\sim$ is
meager, we have by Lemma \ref{KU} that $J = \{\alpha \in
\{0,1\}^{\mathbb{N}}: \sim^{\alpha}$ is meager$\}$ is comeager in
$\{0,1\}^{\mathbb{N}}$. By transfinite induction define a
sequence $\{\alpha_i\}_{i < \omega_1}$ by picking $\alpha_j$ such
$\alpha_j \in J - \bigcup_{i<j} \sim^{\alpha_i}$, the Baire Category Theorem guaranteeing that this choice is always possible.  Let $Y = \{\alpha_i\}_{i < \omega_1}$.

\end{proof}

To see that there is in fact a continuum of pairwise non-homologous
loops, we invoke the following theorem of Mycielski
\cite{M}:

\begin{thm}\label{Myc}  Every meager relation on a perfect Polish space admits a perfect, pairwise non-related set.

\end{thm}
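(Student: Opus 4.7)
The plan is a standard fusion / Cantor-scheme construction. Write the meager relation as $R = \bigcup_{n \in \mathbb{N}} F_n$ with each $F_n$ nowhere dense in $Y \times Y$, and set $G_n = F_0 \cup \cdots \cup F_n$; since a finite union of nowhere dense sets is nowhere dense, each $G_n$ remains nowhere dense in $Y \times Y$. The goal is to build a tree of closed balls $\{K_s\}_{s \in 2^{<\omega}}$ in $Y$ satisfying: (i) $K_{s0}$ and $K_{s1}$ are disjoint closed balls contained in the interior of $K_s$; (ii) $\diam(K_s) < 2^{-|s|}$; (iii) for all distinct $s, t \in 2^n$, the rectangle $K_s \times K_t$ is disjoint from $G_n$.

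The inductive construction would proceed as follows. Let $K_\emptyset$ be any closed ball of small diameter in $Y$. Passing from level $n$ to level $n+1$, I would first use the perfectness of $Y$ to split each $K_s$ with $|s|=n$ into two disjoint closed subballs $K_{s0}, K_{s1}$ of diameter less than $2^{-n-1}$, sitting in $\operatorname{int}(K_s)$. Next I would enumerate the finitely many unordered pairs $\{s', t'\}$ of distinct words in $2^{n+1}$ and treat them sequentially: the open rectangle $\operatorname{int}(K_{s'}) \times \operatorname{int}(K_{t'})$ is nonempty, and since $G_{n+1}$ is nowhere dense in $Y \times Y$, it contains a nonempty basic open rectangle $U \times V$ disjoint from $G_{n+1}$; then shrink $K_{s'}$ to a closed ball inside $U$ and $K_{t'}$ to one inside $V$.

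Setting $P = \bigcap_{n} \bigcup_{s \in 2^n} K_s$, properties (i) and (ii) together with completeness of $Y$ make $P$ a Cantor set: each branch $\alpha \in 2^\omega$ determines a unique $x_\alpha \in \bigcap_n K_{\alpha|n}$, and $\alpha \mapsto x_\alpha$ is a homeomorphism onto $P$, which is in particular perfect. For distinct $\alpha, \beta \in 2^\omega$ and any index $m$, choose $n \geq m$ with $\alpha|n \neq \beta|n$; then $(x_\alpha, x_\beta) \in K_{\alpha|n} \times K_{\beta|n}$, which by (iii) avoids $G_n \supseteq F_m$. Hence $(x_\alpha, x_\beta) \notin F_m$ for every $m$, so $(x_\alpha, x_\beta) \notin R$, as desired.

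The main obstacle I anticipate is the bookkeeping at the inductive step: processing one pair shrinks $K_{s'}$ and $K_{t'}$, which could in principle disturb the disjointness already arranged for a previous pair sharing one of these labels. This worry dissolves because condition (iii) is monotone under shrinking — if $K_{s'} \times K_{t'}$ misses $G_{n+1}$, then so does $\tilde{K}_{s'} \times \tilde{K}_{t'}$ for any $\tilde{K}_{s'} \subseteq K_{s'}$ and $\tilde{K}_{t'} \subseteq K_{t'}$ — so the order of processing is immaterial. The hypothesis that the Polish space $Y$ is perfect enters only through the need for non-degenerate disjoint closed subballs at each splitting step.
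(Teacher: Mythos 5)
The paper does not prove this statement at all: Theorem \ref{Myc} is quoted from Mycielski \cite{M} and used as a black box, so there is no internal proof to compare yours against. Your Cantor-scheme (fusion) argument is the standard proof of this special case of Mycielski's theorem, and it is essentially correct: writing $R=\bigcup_n F_n$ with $G_n=F_0\cup\cdots\cup F_n$ nowhere dense, building nested closed balls $K_s$ with shrinking diameters, disjoint successors, and rectangles at level $n$ avoiding $G_n$, and then reading off the perfect set from the branches is exactly how one makes the paper self-contained at this point; perfectness of $Y$ is indeed used only to split each ball into two nondegenerate disjoint subballs, and your observation that condition (iii) is monotone under shrinking correctly disposes of the bookkeeping worry.

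One small wrinkle to fix: you enumerate \emph{unordered} pairs $\{s',t'\}$ and secure a rectangle $U\times V$ containing $K_{s'}\times K_{t'}$ disjoint from $G_{n+1}$, but this controls only that one orientation, while your stated condition (iii) and your final verification quantify over \emph{ordered} distinct pairs $(\alpha|n,\beta|n)$ in both orders. For a general (possibly non-symmetric) meager relation $R$, a branch pair could always present itself in the unprocessed orientation, so the argument as written only yields one-sided non-relatedness. The repair is immediate: either enumerate ordered pairs of distinct words at each level (still finitely many), or replace $R$ at the outset by $R\cup R^{-1}$, which is again meager since $(x,y)\mapsto(y,x)$ is a homeomorphism of $Y\times Y$. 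In the application in this paper the relation $\sim$ is symmetric, so the issue is invisible there, but the theorem as stated concerns arbitrary meager relations and the fix should be recorded.
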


\section{A clarification in the non-compact setting}  In \cite{P}, a claim is stated without proof, which we state after first giving some definitions.

\begin{defin}   Let $\kappa$ be an infinite cardinal less than continuum.  We say a topological space $Y$ is \emph{$\kappa$-separable} if $Y$ has a dense subset of cardinality less than or equal to $\kappa$ and that $Y$ is \emph{$\kappa$-Polish} if $Y$ is completely metrizable and $\kappa$-separable.
\end{defin}

\begin{defin}  \cite{Mo}  If $Y$ is a Polish space, we say that $A \subseteq Y$ is \emph{$\kappa$-Suslin} if there exists a closed set $D \subseteq Y \times \kappa^{\mathbb{N}}$ such that
$A$ is the projection of $D$ in $Y$ (here $\kappa$ is given the discrete topology).
\end{defin}

\begin{obs}  Since any $\kappa$-Polish space is the continuous image of $\kappa^{\mathbb{N}}$ we may say $A\subseteq Y$ is $\kappa$-Suslin iff there exists a $\kappa$-Polish space $Z$ and a closed set $D \subseteq Y\times Z$ such that $A$ is the projection of $D$ in $Y$.  Thus $\aleph_0$-Polish spaces and $\aleph_0$-Suslin sets are precisely Polish spaces and analytic sets respectively.  Also, a metric space is $\kappa$-separable if and only if every open cover contains a subcover of cardinality at most $\kappa$. This latter condition is often called $\kappa$-Lindel{\"o}f.
\end{obs}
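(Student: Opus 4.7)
The plan is to verify each assertion in turn, reducing everything to standard facts; the key tool for the first assertion is the factorization of $\kappa$-Polish spaces through $\kappa^{\mathbb{N}}$ that the observation's opening clause invokes.

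For the characterization of $\kappa$-Suslin sets, the forward implication is immediate: $\kappa^{\mathbb{N}}$ is itself $\kappa$-Polish (complete in the standard product metric, with a dense set of eventually-constant sequences, of cardinality $\kappa$), so any closed $D\subseteq Y\times\kappa^{\mathbb{N}}$ witnessing that $A$ is $\kappa$-Suslin already exhibits $A$ as the projection of a closed set in $Y\times Z$ with $Z$ a $\kappa$-Polish space. For the reverse, let $Z$ be $\kappa$-Polish, $D\subseteq Y\times Z$ closed with $\pi_Y(D)=A$, and $\phi:\kappa^{\mathbb{N}}\to Z$ a continuous surjection. Then $E=(\mathrm{id}_Y\times\phi)^{-1}(D)$ is closed in $Y\times\kappa^{\mathbb{N}}$ by continuity, and $\pi_Y(E)=\pi_Y(D)=A$ because $\phi$ is surjective.

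The identifications $\aleph_0$-Polish $=$ Polish and $\aleph_0$-Suslin $=$ analytic are then immediate: $\aleph_0$-separable is separable by definition, so $\aleph_0$-Polish is Polish, and the second identity follows from the first together with the characterization of $\kappa$-Suslin sets just established (taking $\kappa=\aleph_0$ and comparing with the definition of analytic in Definition~2.3).

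For the metric characterization of $\kappa$-separability, suppose first that $Y$ has a dense set $\{x_\alpha\}_{\alpha<\kappa}$. The open balls $B(x_\alpha,q)$ for $\alpha<\kappa$ and $q\in\mathbb{Q}_{>0}$ form a basis for $Y$ of cardinality at most $\kappa\cdot\aleph_0=\kappa$; given any open cover, assigning to each basic open set (when possible) a cover element containing it yields a subcover of size at most $\kappa$. Conversely, if every open cover admits a $\leq\kappa$-sized subcover, then for each $n\in\mathbb{N}$ extract from $\{B(y,1/n):y\in Y\}$ a set of centers $D_n$ of cardinality at most $\kappa$; then $\bigcup_n D_n$ is dense in $Y$ and has cardinality at most $\kappa$. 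The only nontrivial conceptual step in the whole argument is the factorization through $\kappa^{\mathbb{N}}$ used in the reverse direction of the first assertion; the remaining pieces are routine.
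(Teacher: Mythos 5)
Your proof is correct and takes exactly the route the paper intends: the Observation is stated without proof, and your argument simply fills in the routine verifications (noting $\kappa^{\mathbb{N}}$ is itself $\kappa$-Polish for the forward direction, pulling the closed witness $D$ back along a continuous surjection $\kappa^{\mathbb{N}}\to Z$ for the reverse, and the standard ball-basis argument for the equivalence of $\kappa$-separability and the $\kappa$-Lindel{\"o}f property in metric spaces). The only ingredient you leave unverified is that every $\kappa$-Polish space is a continuous image of $\kappa^{\mathbb{N}}$, but the Observation itself takes that as its opening premise, so your treatment is consistent with the paper.
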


The aforementioned claim is the following \cite{P}:
\begin{claim}[Pawlikowski]  Let $\kappa$ be an infinite cardinal less than continuum.  Suppose that $X$ is a path connected locally path connected metric space which is $\kappa$-Lindel{\"o}f.  Then $\pi_1(X)$ is of cardinality $\leq \kappa$ or of cardinality continuum.
\end{claim}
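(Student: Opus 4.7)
The plan is to replicate the proof of Theorem 1.1 with three substitutions: \emph{homology} becomes \emph{homotopy}, \emph{countable} becomes \emph{of cardinality at most $\kappa$}, and \emph{analytic} becomes \emph{$\kappa$-Suslin}. Assume $|\pi_1(X)|>\kappa$; we aim to show $|\pi_1(X)|=\mathfrak{c}$. The first step is a $\kappa$-analog of Lemma \ref{loctriv} for $\pi_1$: if every $x\in X$ has a neighborhood on which every loop is nullhomotopic in $X$, then $|\pi_1(X)|\leq\kappa$. The footnoted proof of Lemma \ref{loctriv} adapts verbatim, replacing Lindel{\"o}fness with $\kappa$-Lindel{\"o}fness to obtain a refining cover of cardinality at most $\kappa$ and then invoking Theorem 7.3 of \cite{CC}, which bounds the image of $\pi_1$ by the cardinality of the cover. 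Contrapositively, some $x_0\in X$ has non-nullhomotopic loops in every neighborhood; using local path connectedness to base these loops at $x_0$ via short connecting paths, we obtain non-nullhomotopic loops $f_n$ at $x_0$ with $\diam(f_n)\to 0$. Form the concatenations $f^\alpha$ for $\alpha\in\{0,1\}^\mathbb{N}$ as in Section 2 and define $\alpha\sim\beta$ iff $f^\alpha$ and $f^\beta$ are homotopic as based loops in $X$.

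Mimicking Lemma \ref{littlelemma}, $\sim$ is a $\kappa$-Suslin equivalence relation on the Cantor set: the spaces of loops and homotopies with values in the completion $\overline{X}$ (itself $\kappa$-Polish) are $\kappa$-Polish under sup metric, and $\sim$ is the projection of a closed set of triples $(\alpha,\beta,h)$ with $h$ a homotopy from $f^\alpha$ to $f^\beta$ whose image lies in $X$. The one-coordinate property $\alpha\nsim\beta$ transfers verbatim, since homotopy in $\pi_1$ admits left and right cancellation just as homology does. Since $\kappa<\mathfrak{c}$, every $\kappa$-Suslin subset of a Polish space has the property of Baire (the descriptive set theoretic ingredient motivating Pawlikowski's cardinality hypothesis), so the Kuratowski--Ulam argument of Lemma \ref{Pawlikowski} shows $\sim$ is meager in $\{0,1\}^\mathbb{N}\times\{0,1\}^\mathbb{N}$, and Theorem \ref{Myc} yields a perfect set of pairwise $\nsim$ elements. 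Thus $|\pi_1(X)|\geq\mathfrak{c}$; the reverse inequality follows from $\kappa$-separability, since $|X|\leq\kappa^{\aleph_0}\leq\mathfrak{c}$ and each continuous loop is determined by its values on a countable dense subset of the parameter interval.

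The principal obstacle is that $X$ need not itself be complete, so $C([0,1]^2,X)$ is not $\kappa$-Polish; one must realize $\sim$ as $\kappa$-Suslin by working inside $C([0,1]^2,\overline{X})$ and pulling back the condition that the homotopy have image in $X$ through a $\kappa$-Suslin representation of $X$ inside $\overline{X}$. A secondary matter is pinning down the precise descriptive set theoretic statement granting the property of Baire for $\kappa$-Suslin sets when $\kappa<\mathfrak{c}$, which is the structural reason for Pawlikowski's cardinality hypothesis on $\kappa$.
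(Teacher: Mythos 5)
There is a genuine gap, and it sits exactly where the paper itself stops short: the paper explicitly states that it knows of no proof of Pawlikowski's claim as stated, and instead proves a weaker theorem assuming both that $X$ is $\kappa$-Polish (completely metrizable) and the set-theoretic hypothesis $BP(\kappa)$. Your argument quietly assumes both of the missing ingredients. First, your assertion that ``since $\kappa<\mathfrak{c}$, every $\kappa$-Suslin subset of a Polish space has the property of Baire'' is not a theorem of ZFC. It is true for $\kappa=\aleph_0$, but already for $\kappa=\aleph_1$ it can fail: by Shoenfield absoluteness every $\Sigma^1_2$ set is $\aleph_1$-Suslin, and it is consistent that $\aleph_1<\mathfrak{c}$ while some $\Sigma^1_2$ set lacks the property of Baire (e.g.\ in a random real extension of $L$ with large continuum). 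This is precisely why the paper isolates $BP(\kappa)$ as an explicit hypothesis and derives it only from extra axioms such as Martin's Axiom (via Theorem 2F.2 of \cite{Mo} and \cite{F}), not from $\kappa<\mathfrak{c}$ alone. Without $BP(\kappa)$ the Kuratowski--Ulam argument of Lemma \ref{Pawlikowski} has no purchase on $\approx$, and the route through Theorem \ref{Myc} collapses.

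Second, your proposed repair for the lack of completeness does not work. A $\kappa$-Lindel\"of metric space $X$ sits inside its completion $\overline{X}$ as an arbitrary subset; there is no reason for it to admit a ``$\kappa$-Suslin representation'' there (its cardinality can be as large as $\kappa^{\aleph_0}$, so it is not even a union of $\kappa$ many points in general). Moreover, even granting such a representation, the set of homotopies $h\in C([0,1]^2,\overline{X})$ with $h([0,1]^2)\subseteq X$ is cut out by a universal quantifier over the square, and Suslin classes are closed under projections (existential quantification), not co-projections; so this set need not be $\kappa$-Suslin, and the relation $\approx$ would not be exhibited as $\kappa$-Suslin. The paper sidesteps both problems by assuming $X$ completely metrizable, so that the loop space $L$ and homotopy space $D$ are themselves $\kappa$-Polish and $\approx$ is the projection of a genuinely closed set. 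Your second case (no small essential loops, nerve argument via Theorem 7.3 of \cite{CC}) and the upper bound $|\pi_1(X)|\leq\mathfrak{c}$ are fine, but as written the first case proves only the paper's weaker theorem, not the claim; if your outline did go through as stated, it would settle a question the authors explicitly leave open.
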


We know of no proof of this claim using the methods of \cite{P}.  We state and prove a theorem with more hypotheses but which is nonetheless of interest.  Towards this, let $BP(\kappa)$ be the statement that all $\kappa$-Suslin sets have the property of Baire.  The claim $BP(\aleph_0)$ is simply true, as we noted earlier.  If one assumes extra set theoretic assumptions, for example Martin's Axiom, then $BP(\kappa)$ holds provided the successor cardinal $\kappa^+$ is less than continuum.  This follows from Theorem 2F.2 in \cite{Mo} combined with the consequence of Martin's Axiom that in a Polish space the collection of sets with the property of Baire is closed under unions of index less than continuum (see \cite{F}).  Martin's Axiom is known to be consistent with the standard axioms of Zermelo-Fraenkel set theory with the axiom of choice \cite {B}.

Our theorem is the following:

\begin{thm}  Assume $BP(\kappa)$.  Suppose that $X$ is a connected, locally path connected $\kappa$-Polish space.  Then $\pi_1(X)$ is either of cardinality $\leq \kappa$ or of cardinality continuum.
\end{thm}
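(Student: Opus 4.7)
The plan is to parallel the proof of Theorem 1.1, replacing ``Polish'' with ``$\kappa$-Polish'' and ``analytic'' with ``$\kappa$-Suslin'' throughout, and invoking $BP(\kappa)$ at exactly the place where the earlier proof used that analytic sets have the property of Baire. Suppose $|\pi_1(X)| > \kappa$. The opening step is to establish a $\kappa$-analogue of Lemma \ref{loctriv} for the fundamental group: if $X$ is a connected, locally path connected $\kappa$-Polish space which is locally trivial with respect to a homomorphism $g : \pi_1(X) \to K$ (meaning some $\epsilon > 0$ exists such that every loop of diameter less than $\epsilon$ has trivial image under $g$), then $|g(\pi_1(X))| \leq \kappa$. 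This is obtained by repeating the proof of Lemma 7.6 of \cite{CC} with $\kappa$-Lindel\"ofness of $\kappa$-separable metric spaces in place of ordinary Lindel\"ofness. Applied to $g = \operatorname{id}$, it furnishes a point $x \in X$ and a sequence of loops $f_n$ based at $x$ with $\diam(f_n) \leq 1/2^n$, none null-homotopic.

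For each $\alpha \in \{0,1\}^{\mathbb{N}}$, form the infinite concatenation $f^\alpha$ as in Section 2 and define $\sim$ on the Cantor set by $\alpha \sim \beta$ iff $f^\alpha$ is homotopic to $f^\beta$ rel endpoints. The argument of Lemma \ref{littlelemma} then shows $\sim$ is $\kappa$-Suslin: the loop space $H = \mathcal{C}([0,1],X)$ and the homotopy space $\mathcal{H} = \mathcal{C}([0,1]^2, X)$ are both $\kappa$-Polish under the sup metric (completely metrizable since $X$ is, and $\kappa$-separable by uniform approximation from a $\kappa$-dense subset of $X$), so the relevant projections and intersections produce $\kappa$-Suslin sets. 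The defining equation for $\sim$ is in fact simpler than in the homology case, since one only needs a single homotopy rather than a commutator decomposition. The property that $\alpha \nsim \beta$ whenever $\alpha$ and $\beta$ differ at exactly one coordinate follows by cancellation in $\pi_1(X)$.

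By $BP(\kappa)$, $\sim$ has the property of Baire. The proof of Lemma \ref{Pawlikowski} now applies verbatim to show $\sim$ is meager --- were it comeager in some basic $U \times V$, the coordinate-flipping homeomorphism $\Phi$ of $U$ would yield $\alpha$ and $\gamma = \Phi(\alpha)$ both $\sim$-related to a common $\beta$, contradicting the differ-at-one-point property. Theorem \ref{Myc} of Mycielski, applied to this meager relation on the perfect Polish space $\{0,1\}^{\mathbb{N}}$, then produces a perfect set of pairwise non-$\sim$-related points, necessarily of cardinality $2^{\aleph_0}$. Hence $|\pi_1(X)| \geq 2^{\aleph_0}$, and the reverse inequality is immediate from $\kappa$-separability of $X$ (loops are determined by their values on a countable dense subset of $[0,1]$, giving $|\pi_1(X)| \leq \kappa^{\aleph_0} \leq 2^{\aleph_0}$).

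The main obstacle I anticipate is the first step, the $\kappa$-analogue of Lemma 7.6 of \cite{CC}, since this requires re-auditing a covering/monodromy argument in the $\kappa$-separable setting and checking that nothing depended on genuine countability. The descriptive set theoretic portion transfers with only cosmetic changes, because Lemma \ref{Pawlikowski} relies solely on the property of Baire of $\sim$ (supplied here by $BP(\kappa)$) and Theorem \ref{Myc} is insensitive to the size of $\kappa$.
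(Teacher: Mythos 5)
Your descriptive set theoretic half is fine and matches the paper: the relation $\approx$ (homotopy of $f^\alpha$ and $f^\beta$) is exhibited as $\kappa$-Suslin via the $\kappa$-Polish spaces of loops and homotopies, $BP(\kappa)$ supplies the property of Baire, Lemma \ref{Pawlikowski} gives meagerness, and Theorem \ref{Myc} gives a perfect set of pairwise non-homotopic loops. The genuine gap is in your opening reduction. You formulate the $\kappa$-analogue of Lemma 7.6 of \cite{CC} with a \emph{uniform} $\epsilon$ (every loop of diameter less than $\epsilon$ is $g$-trivial), and then assert that its failure for $g=\operatorname{id}$ ``furnishes a point $x$ and a sequence of essential loops $f_n$ based at $x$ with $\diam(f_n)\leq 2^{-n}$.'' That inference is exactly where Section 2 used compactness of $X$ (Hausdorff-metric convergence of the images $f_n(S^1)$ to a point), and it is false for a general $\kappa$-Polish space: take a ray $[0,\infty)$ with a circle of diameter $2^{-n}$ attached at the integer $n$. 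There is no uniform $\epsilon$, yet no point has arbitrarily small essential loops near it, so your construction of the based sequence cannot start; and since your Lemma-7.6 analogue only applies when a uniform $\epsilon$ exists, this middle case is left with no bound on $|\pi_1(X)|$ at all. In short, your dichotomy (uniform $\epsilon$ versus none) does not exhaust the possibilities in the non-compact setting.

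The paper's dichotomy is the correct one: either some point $x$ has arbitrarily small essential loops nearby (then base them at $x$ by local path connectedness and run your Cantor-set argument), or else \emph{every} point $x$ has a neighborhood $U_x$ all of whose loops are nulhomotopic in $X$ -- with no uniformity in the size of $U_x$. In the second case one cannot quote a uniform-$\epsilon$ lemma; instead one takes a locally finite open star refinement of $\{U_x\}$ (paracompactness of metric spaces), passes to path components, extracts a subcover $\mathcal{U}$ of cardinality at most $\kappa$ by $\kappa$-Lindel\"ofness, observes that the identity on $\pi_1(X)$ is 2-set simple relative to $\mathcal{U}$, and applies Theorem 7.3 (2) of \cite{CC} to realize $\pi_1(X)$ as a quotient of the fundamental group of a nerve with at most $\kappa$ vertices and edges, hence of cardinality at most $\kappa$. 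Your proposal would be repaired by stating the covering lemma in this pointwise form (as in the corrected proof of Lemma 7.6 described in the footnote to Lemma \ref{loctriv}) rather than with a uniform $\epsilon$, and by splitting the cases on the existence of a point with arbitrarily small essential loops rather than on the existence of a uniform scale.
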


\begin{proof}  There are two cases.  Suppose there exists a point $x$ near which we have arbitrarily small essential (non-nulhomotopic) loops.  By local path connectedness we have a sequence of essential loops $f_1, f_2, \ldots$ based at $x$ with $\diam(f_n)\leq 2^{-n}$.  Define a map from the Cantor set $\{0,1\}^{\mathbb{N}}$ to the space $L$ of continuous loops based at $x$ under the $\sup$ metric as in section 2 by letting $\alpha \mapsto f^{\alpha}$.  This map is continuous.  Define an equivalence relation $\approx$ on the Cantor set by letting $\alpha \approx \beta$ if and only if $f^{\alpha}$ is homotopic to $f^{\beta}$.

Let $D$ be the space of continuous mappings $H: [0,1]\times[0,1] \rightarrow X$ such that $H(s,0) = H(s,1) = x$, also under the $\sup$ metric.  Letting $\mathcal{D} = \{(f, g, H) \in L \times L\times D: (\forall t \in [0,1])(f(t) = H(0, t)$ and $g(t) = H(1, t))\}$, we have that the spaces $L,D$ and $L \times L\times D$ are all $\kappa$-Polish and $\mathcal{D}$ is closed in $L \times L\times D$.  Letting $\Gr$ be the graph of the map $(\alpha, \beta) \mapsto (f^{\alpha}, f^{\beta})$, the set $\Gr \times D$ is closed in $\{0,1\}^{\mathbb{N}} \times \{0,1\}^{\mathbb{N}} \times L \times L\times D$.  The relation $\approx$ is the projection of the closed set $(\{0,1\}^{\mathbb{N}}\times\{0,1\}^{\mathbb{N}}\times \mathcal{D}) \cap (\Gr \times D)$ to $\{0,1\}^{\mathbb{N}} \times \{0,1\}^{\mathbb{N}}$, and so is $\kappa$-Suslin.  By $BP(\kappa)$ the relation $\approx$ has the property of Baire.  If $\alpha, \beta \in \{0,1\}^{\mathbb{N}}$ differ at exactly one point, then $\alpha \not\approx \beta$ by the same proof as in Lemma \ref{littlelemma}.  By Lemma \ref{Pawlikowski} the set $\approx$ is meager, and so $\pi_1(X)$ is of cardinality continuum by Theorem \ref{Myc}.

Supposing that no such point exists, select for each $x\in X$ an open neighborhood $U_x$ such that any loop which lies in $U_x$ is nulhomotopic in $X$.  Since paracompactness follows from metrizability, select an open star refinement $\mathcal{U}_1$ of $\{U_x\}_{x\in X}$ that is locally finite.  By local path connectedness, let $\mathcal{U}_2$ be the open cover consisting of path components of elements of $\mathcal{U}_1$.  As $X$ is $\kappa$-separable and metric we may pick a subcover $\mathcal{U}\subseteq \mathcal{U}_2$ whose cardinality is less than or equal to $\kappa$.  Notice that the identity mapping $\pi_1(X) \rightarrow \pi_1(X)$ is 2-set simple relative to $\mathcal{U}$ (see \cite{CC} Definition 7.1).  Letting $\mathcal{N}$ be the nerve of $\mathcal{U}$ we have that $\pi_1(X)$ is a factor group of $\pi_1(\mathcal{N})$ by Theorem 7.3 (2) of \cite{CC}.  As $\mathcal{N}$ is a simplicial complex of at most $\kappa$ many vertices and $1$-cells, we are done.

\end{proof}

The comparable statement for first homology holds, via a similar argument.

\begin{thm}  Suppose $BP(\kappa)$.  Suppose that $X$ is a connected, locally path connected $\kappa$-Polish space.  Then $H_1(X)$ is either of cardinality $\leq \kappa$ or of cardinality continuum.

\end{thm}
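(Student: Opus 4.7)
The plan is to mimic the two-case structure of the preceding theorem, replacing homotopy by homology throughout. Split on whether $X$ admits a point $x$ near which there are arbitrarily small non-nullhomologous loops.

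In the \emph{non-accumulation} case, for every $x\in X$ there exists a path-connected open neighborhood $U_x$ such that every loop in $U_x$ is nullhomologous in $X$ (use local path-connectedness together with the assumption). Now repeat the star-refinement argument from the proof of the previous theorem: a locally finite open star refinement, its decomposition into path components, and a $\kappa$-sized subcover $\mathcal{U}$ produced from $\kappa$-Lindel\"ofness. The identity map is again $2$-set simple relative to $\mathcal{U}$, so by Theorem 7.3(2) of \cite{CC}, $\pi_1(X)$ is a quotient of $\pi_1(\mathcal{N})$, and hence $H_1(X)$ is a quotient of $H_1(\mathcal{N})$. Since the nerve $\mathcal{N}$ has at most $\kappa$ vertices and $1$-cells, $H_1(\mathcal{N})$ — and therefore $H_1(X)$ — has cardinality at most $\kappa$.

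In the \emph{accumulation} case, fix such a point $x$ and, using local path-connectedness, choose loops $f_n$ based at $x$ with $\diam(f_n)\leq 2^{-n}$, each non-nullhomologous. Define $\alpha\mapsto f^\alpha$ from the Cantor set to the loop space $L$ as in Section 2, and set $\alpha\sim\beta$ iff $f^\alpha$ is homologous to $f^\beta$. The crux is to show $\sim$ is $\kappa$-Suslin. For this, combine the two ingredients already used in Lemma \ref{littlelemma} and in the preceding theorem: let $C_n:L^{2n}\to L$ send $(l_1,\ldots,l_{2n})$ to the iterated product of commutators $[l_1,l_2]\ast\cdots\ast[l_{2n-1},l_{2n}]$, and let $\mathcal{D}\subseteq L\times L\times L\times D$ be the closed set of tuples $(p,q,r,H)$ with $H$ a homotopy from $p$ to $q\ast r$. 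Then $f^\alpha$ is homologous to $f^\beta$ precisely when $f^\alpha$ is homotopic to $f^\beta\ast c$ for some $c\in C_n(L^{2n})$ and some $n\in\mathbb{N}$, i.e.\ when $(\alpha,\beta)$ lies in a countable union of projections of closed subsets of $\kappa$-Polish spaces along a continuous graph. This union is $\kappa$-Suslin, so by $BP(\kappa)$ the relation $\sim$ has the property of Baire.

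Finally, note that if $\alpha,\beta\in\{0,1\}^{\mathbb{N}}$ differ at exactly one coordinate $n$, then the cancellation argument from Lemma \ref{littlelemma} goes through word-for-word with ``homologous'' in place of ``homotopic'', forcing $f_n$ to be nullhomologous, a contradiction; hence $\alpha\not\sim\beta$. We can therefore apply Lemma \ref{Pawlikowski} to conclude $\sim$ is meager, and then Theorem \ref{Myc} to obtain a perfect set of pairwise non-$\sim$-related elements of the Cantor set, which yields continuum many pairwise non-homologous loops and so $|H_1(X)|=2^{\aleph_0}$. The main obstacle is verifying the $\kappa$-Suslin claim for $\sim$; the gain over the $\pi_1$ case is purely formal once one observes that the commutator-product maps $C_n$ package the passage to the abelianization into a countable union of closed-set projections, so that the cardinality of $X$ never re-enters the descriptive-set-theoretic part of the proof beyond the hypothesis $BP(\kappa)$.
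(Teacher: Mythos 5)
Your accumulation case is essentially the paper's argument and is sound: coding homology of $f^\alpha$ and $f^\beta$ as ``$f^\alpha$ is homotopic to $f^\beta\ast c$ for some $c$ in the image of a commutator-product map $C_n$'' exhibits $\sim$ as a countable union of projections of closed subsets of $\kappa$-Polish spaces, hence $\kappa$-Suslin; $BP(\kappa)$ gives the property of Baire, the one-coordinate cancellation argument of Lemma \ref{littlelemma} applies verbatim, and Lemma \ref{Pawlikowski} together with Theorem \ref{Myc} yields continuum many homology classes. This is exactly how the paper handles that case.

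The gap is in your other case. There the hypothesis only provides, for each $x$, a neighborhood $U_x$ in which every loop is \emph{nulhomologous} in $X$, not nulhomotopic. Consequently the identity map $\pi_1(X)\rightarrow\pi_1(X)$ need not be $2$-set simple relative to $\mathcal{U}$, and your intermediate claim that $\pi_1(X)$ is a quotient of $\pi_1(\mathcal{N})$ is false in general: the paper's Example 1.2, the Peano continuum $X^{\mathbb{N}}$ with $\pi_1(X)=A_5$, has trivial $H_1$, so it falls squarely in this case, yet $\pi_1(X^{\mathbb{N}})=A_5^{\mathbb{N}}$ has cardinality continuum while the nerve of any cover of cardinality at most $\kappa$ has fundamental group of cardinality at most $\kappa$. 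So your route, taken literally, would prove a false statement about $\pi_1$. The repair is the one the paper makes: apply Theorem 7.3\,(2) of \cite{CC} not to the identity but to the Hurewicz epimorphism $g:\pi_1(X)\rightarrow H_1(X)$, which \emph{is} $2$-set simple relative to $\mathcal{U}$, since the star refinement puts the union of any two intersecting elements of $\mathcal{U}$ inside some $U_x$, where loops are nulhomologous and hence die under $g$. This exhibits $H_1(X)=g(\pi_1(X))$ directly as a factor group of $\pi_1(\mathcal{N})$, giving cardinality at most $\kappa$, without ever asserting anything about the size of $\pi_1(X)$ itself.
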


\begin{proof}  The proof treats the two analogous cases as in the previous theorem.  Each element of $H_1(X)$ has a representative that is a mapping of a circle.  Suppose there exists a point $x$ near which there exist arbitrarily small mappings of $S^1$ that aren't nulhomologous. Then as in the proof of Theorem 1.1 we treat these mappings as paths that are based at $x$ by local path connectedness and select paths $f_n$ at $x$ that aren't nulhomologous such that $\diam(f_n) \leq \frac{1}{2^n}$, define $f^{\alpha}$ for each $\alpha \in \{0,1\}^{\mathbb{N}}$ and relations $\sim_n$ in the same way.  Each $\sim_n$ has the property of Baire since $\sim_n$ is $\kappa$-Suslin by a proof as in the previous theorem, and letting $\sim = \bigcup_{n \in \mathbb{N}} \sim_n$ we have that $\sim$ has the property of Baire, and $\alpha \sim \beta$ if and only if $f^{\alpha}$ is homologous to $f^{\beta}$.  The relation $\sim$ also enjoys the property that if $\alpha$ and $\beta$ differ at exactly one point then $\alpha \nsim \beta$.  By Theorem \ref{Pawlikowski} and Lemma \ref{Myc} we get continuum many classes.

If no such point exists, then for each $x \in X$ we have an open neighborhood $U_x$ such that any loop in $U_x$ is nulhomologous.  This gives an open cover $\{U_x\}_{x\in X}$ of $X$, which we refine as in the previous theorem to a cover $\mathcal{U}$ of cardinality at most $\kappa$ each of whose elements is path connected and such that given any mapping of $S^1$ to $X$ whose image lies entirely in the union of two elements of $\mathcal{U}$ is nulhomologous.  Let $g:\pi_1(X) \rightarrow H_1(X)$ be the Hurewitz map, which is onto.  This map $g$ is 2-set simple relative to $\mathcal{U}$ (\cite{CC} Definition 7.1), and so $g(\pi_1(X))$ is a factor group of the fundamental group of the nerve of $\mathcal{U}$, and so is at most of cardinality $\kappa$ by Theorem 7.3 (2) of \cite {CC}.

\end{proof}

Since $BP(\aleph_0)$ is simply true, we may state special cases of the above two theorems:

\begin{thm} \label{separable} Suppose that $X$ is a connected, locally path connected Polish space.  Then $\pi_1(X)$ is either countable or of cardinality continuum.  Also, $H_1(X)$ is either countable or of cardinality continuum.
\end{thm}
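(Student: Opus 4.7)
The plan is to invoke the two previous theorems with $\kappa = \aleph_0$ and simply translate the terminology. Recall from the observation after Definition 4.2 that $\aleph_0$-Polish spaces are exactly Polish spaces and $\aleph_0$-Suslin sets are exactly analytic sets. Moreover, the statement $BP(\aleph_0)$ — that every analytic subset of a Polish space has the property of Baire — is the classical fact already recorded earlier in the paper (e.g.\ in Definition 3.1, where it is noted that analytic spaces have the property of Baire).

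First I would observe that, since $X$ is a connected locally path connected Polish space, it is in particular a connected locally path connected $\aleph_0$-Polish space. Second, since $BP(\aleph_0)$ holds, the hypotheses of the preceding two theorems (the one for $\pi_1$ and the one for $H_1$) are satisfied with $\kappa = \aleph_0$. Applying each of them in turn gives the two disjunctions: $\pi_1(X)$ has cardinality $\leq \aleph_0$ or continuum, and likewise $H_1(X)$ has cardinality $\leq \aleph_0$ or continuum. Since cardinality at most $\aleph_0$ is the same as countability, this yields the stated conclusion.

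There is no substantive obstacle to overcome: the theorem is genuinely a corollary, and the entire content of the proof is recording that $\aleph_0$ meets all the definitional prerequisites. The only subtlety worth mentioning explicitly is that, unlike higher $\kappa$, the statement $BP(\aleph_0)$ requires no extra set-theoretic hypothesis beyond ZFC, so Martin's Axiom (invoked earlier to justify $BP(\kappa)$ for larger $\kappa$) does not enter. I would therefore make the proof essentially a single-sentence citation of the two prior theorems, preceded by the reminder that $\aleph_0$-Polish means Polish, $\aleph_0$-Suslin means analytic, and $BP(\aleph_0)$ is a theorem of ZFC.
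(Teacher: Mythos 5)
Your proposal is correct and is exactly the paper's route: the paper gives no separate proof, only the remark that $BP(\aleph_0)$ is simply true (analytic sets have the property of Baire), so the statement is the $\kappa=\aleph_0$ special case of the two preceding theorems, with $\aleph_0$-Polish meaning Polish and $\aleph_0$-Suslin meaning analytic. Your added observation that no extra set-theoretic hypothesis such as Martin's Axiom is needed matches the paper's own framing.
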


\subsection*{Acknowledgements}
The first author was supported by Simons Foundation Grant 246221.

\end{document}